\tikzset{black node/.style={draw, circle, fill = black, minimum size = 5pt, inner sep = 0pt}}
\tikzset{white node/.style={draw, circle, fill = white, minimum size = 5pt, inner sep = 0pt}}
\tikzset{normal/.style = {draw=none, fill = none}}
\newtheorem{theorem}{Theorem}
\newtheorem{lemma}{Lemma}
\newtheorem{corollary}{Corollary}
\newtheorem{proposition}{Proposition}
\theoremstyle{remark}
\theoremstyle{definition}
\newtheorem{definition}{Definition}
\newcommand{\ie}{{i.e.}}
\newcommand{\etal}{{et al.}}
\newcommand{\N}{\mathbb{N}}
\newcommand{\intv}[2]{\left \{ #1,\dots, #2 \right \}}
\DeclareMathOperator{\powset}{\mathcal{P}^{<\omega}}
\newcommand{\lmc}{\unlhd}
\newcommand{\lleq}{\preceq} 
\DeclareMathOperator{\incl}{Incl}
\newcommand{\mref}[1]{\hyperref[#1]{(M\ref*{#1})}}
\newcommand{\itemref}[1]{\hyperref[#1]{(\ref*{#1})}}
\DeclareMathOperator{\lab}{lab} 
\newcommand{\seqb}[1]{\left ( #1 \right )} 
\DeclareMathOperator{\rt}{root} 
\DeclareMathOperator{\mult}{mult} 
\title{Multigraphs without large bonds\\are wqo by~contraction\thanks{This work was partially
    supported by the Warsaw Center of Mathematics and Computer
    Science. Emails:
    \href{mailto:mjk@mimuw.edu.pl}{\texttt{mjk@mimuw.edu.pl}},
    \href{mailto:jean-florent.raymond@mimuw.edu.pl}{\texttt{raymond@tu-berlin.de}},
    and \href{mailto:theophile.trunck@ens-lyon.fr}{\texttt{theophile.trunck@ens-lyon.fr}}.
  }}
\author{Marcin Kamiński\thanks{Institute of Computer Science, University of Warsaw, Poland.}\and Jean-Florent
  Raymond\thanks{Logic and Semantics Research Group, Technische Universität Berlin, Germany. This work has been done while this author was affiliated to the University of Montpellier (LIRMM) and the University of Warsaw.}\and Théophile Trunck\thanks{LIP, ÉNS de Lyon, France.}}
\date{}
\begin{document}
\maketitle

\abstract{
We show that the class of multigraphs with at most $p$ connected components and bonds of size at most $k$ is well-quasi-ordered by edge contraction for all positive integers~$p,k$. (A~\emph{bond} is a minimal non-empty edge cut.) We also characterize canonical antichains for this relation.}

\section{Introduction}
\label{sec:intro}

A \emph{well-quasi-order} (\emph{wqo} for short) is a quasi-order which contains neither infinite decreasing sequences, nor infinite collections of pairwise incomparable elements. The beginnings of the theory of well-quasi-orders go back to the 1950s and some early results on wqos include that of Higman on sequences from a wqo~\cite{Higman01011952}, Kruskal's Tree Theorem \cite{Kruskal60well}, as well as other (now standard) techniques, for example the \emph{minimal bad sequence} argument of Nash-Williams~\cite{Nash63onwe}.

A recent result on wqos and arguably one of the most significant results in this field is  the theorem by Robertson and Seymour which states that graphs are well-quasi-ordered by the minor relation~\cite{Robertson2004325}. Later, the same authors also proved that graphs are well-quasi-ordered by the immersion relation~\cite{Robertson:2010:GMX}.

Nonetheless, most of containment relations do not well-quasi-order the class of all graphs. For example, graphs are not well-quasi-ordered by subgraphs, induced subgraphs, or topological minors. Therefore, attention was naturally brought to classes of graphs where well-quasi-ordering for such relations exists. Damaschke proved that cographs are well-quasi-ordered by induced subgraphs~\cite{JGT:JGT3190140406} and Ding characterized subgraph ideals that are well-quasi-ordered by the subgraph relation~\cite{JGT:JGT3190160509}. Finally, Liu and Thomas recently announced that graphs excluding any graph of a class called ``Robertson chain'' as topological minor are well-quasi-ordered by the topological minor relation~\cite{Lui2014}.

An \emph{antichain} is a collection of non-comparable elements.
Another line of research is to classify non-wqo
containment relations depending on the antichains they contain.
Ding introduced the concepts of \emph{canonical antichain} and \emph{fundamental antichain} aimed at extending the study of the existence of obstructions of being well-quasi-ordered in a quasi-order \cite{Ding20091123}. In particular, he proved that finite graphs do not
admit a canonical antichain under the induced subgraph relation but they do under the subgraph~relation.

In this paper, we consider finite graphs where parallel edges are allowed,
but not loops. Graphs where no edges are parallel are referred to as
\emph{simple graphs}. An~\emph{edge contraction} is the operation that identifies
two adjacent vertices and deletes the loops that were possibly created (but
keeps multiple edges). A graph $H$ is said to be a \emph{contraction} of
a graph $G,$ denoted $H \lmc G,$ if $H$ can be obtained from $G$ by a sequence
of edge contractions. A \emph{bond} is a minimal non-empty edge cut, \ie~a minimal set of edges whose removal increases the number of connected components (cf.~\autoref{fig:bond-house}).
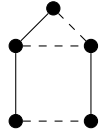
\begin{figure}[h]
  \centering
  \begin{tikzpicture}[every node/.style = black node]
  \begin{scope}
    \node (N0) at (0,0) {};
    \node (N1) at (0,-1) {};
    \node (N2) at (1,-1) {};
    \node (N3) at (1,0) {};
    \node (N4) at (0.5,0.5) {};
    \draw
    (N0) edge (N1)
    (N1) edge[dashed] (N2)
    (N2) edge (N3)
    (N3) edge[dashed] (N4)
    (N4) edge (N0)
    (N0) edge[dashed] (N3);
  \end{scope}
\end{tikzpicture}
  \caption{A bond of order 3 (dashed edges) in the house graph.}
  \label{fig:bond-house}
\end{figure}

The contraction
relation defines a quasi-order on finite graphs. This is not a wqo, as
witnessed by the following infinite antichains, that are also depicted
in \autoref{fig:pkantichain}: $\mathcal{A}_\theta$ is the class of
connected graphs with two vertices and $\mathcal{A}_{\overline{K}}$ is
the class of edgeless graphs with at least one vertex. For the contraction ordering, infinite
antichains are the only obstructions to well-quasi-ordering as
decreasing sequences are always finite.

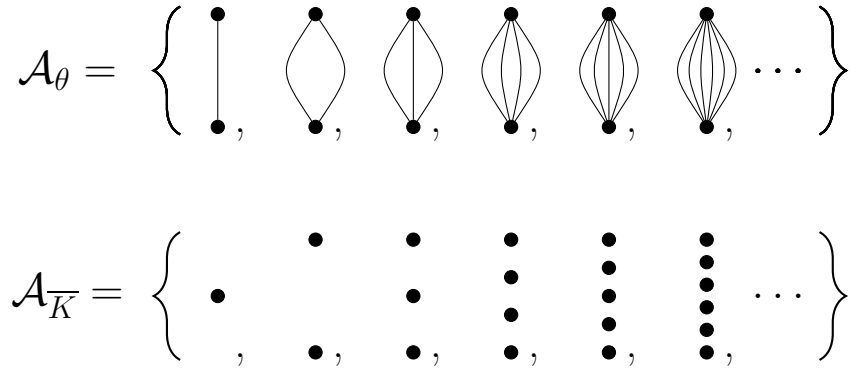
\begin{figure}[h]
  \centering
  \begin{tikzpicture}[every node/.style = black node]
    \def \maxi {5}
    \begin{scope} 
    
      \node[normal] at (-2, 0.75) {{\Large$\mathcal{A}_\theta = $}};
      \node (x) at (0,0) {};
    \node (y) at (0,1.5) {};
    \draw (x) edge (y);
      \node[normal] at (0.3,-0.1) {{\Large,}};
    \foreach \nb in {1,...,\maxi} {
      \begin{scope}[xshift = 1.3*\nb cm]
      \begin{scope}[xscale = 0.5, yscale = 0.75, rotate = 90]
        \node (x) at (0,0) {};
        \node (y) at (2,0) {};
        \foreach \i in {0,...,\nb}{
          \draw (x) .. controls (1, 1 - \i * 2 / \nb) and (1, 1 - \i * 2 / \nb) .. (y);
        }
      \end{scope}
      \node[normal] at (0.3,-0.1) {{\Large,}};
    \end{scope}
    \node[normal] at (7.5, 0.75) {{\Large$\dots$}};
    \draw[thick,decorate,decoration={brace,amplitude=10pt}] (-0.5, -0.1)
    -- (-0.5, 1.6);
    \draw[thick,decorate,decoration={brace,amplitude=10pt}] (8, 1.6)
    -- (8, -0.1);
    }
  \end{scope}
  \begin{scope}[yshift = -2cm] 
    \node[normal] at (-2, -0.25) {{\Large$\mathcal{A}_{\overline{K}} = $}};
    \node (W) at (0,-.25) {};
    \node[normal] at (0.3, -1.1) {{\Large,}};
        \foreach \nb in {1,...,\maxi} {
          \begin{scope}[xshift = 1.3*\nb cm]
          \begin{scope}[scale = 0.5]
            \foreach \i in {0,...,\nb}{
              \node (W\i) at (0, 1 - \i * 3 / \nb) {};
            }
          \end{scope}
          \node[normal] at (0.3, -1.1) {{\Large,}};
        \end{scope}
    }
    \node[normal] at (7.5, -0.25) {{\Large$\dots$}};
    \draw[thick,decorate,decoration={brace,amplitude=10pt}] (-0.5,-1.1)
    -- (-0.5,0.6);
    \draw[thick,decorate,decoration={brace,amplitude=10pt}] (8, 0.6) --
    (8, -1.1);
  \end{scope}

  \end{tikzpicture}
  \caption{Two infinite antichains for contractions: multiedges and cocliques.}
  \label{fig:pkantichain}
\end{figure}

For every $p,k \in \N,$ let $\mathcal{G}_{p,k}$ be the class of
graphs having at most $p$ connected components and not containing a
bond of order more than $k.$ Our main result is the~following.

\begin{theorem}\label{t:main}
  For every $p,k \in \N,$ the class $\mathcal{G}_{p,k}$ is well-quasi-ordered by $\lmc.$
\end{theorem}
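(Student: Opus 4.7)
My approach is a standard decomposition-and-embedding strategy combined with a minimal-bad-sequence argument. First I reduce to the connected case: edge contraction preserves the number of connected components, so I can view a graph in $\mathcal{G}_{p,k}$ as an unordered tuple of at most $p$ connected graphs, and Higman's lemma on sequences of bounded length then reduces wqo of $\mathcal{G}_{p,k}$ to wqo of the connected subclass of $\mathcal{G}_{1,k}$.

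Given a connected $G \in \mathcal{G}_{1,k}$, I would recursively split $G$ along a bond of size at most $k$, producing a rooted tree $T_G$ whose internal edges correspond to the bonds used in the splits (each carrying at most $k$ edges together with the matching of their endpoints on either side --- finitely many combinatorial patterns for a given bond size) and whose nodes correspond to ``bond-indecomposable'' pieces annotated with how the incident bonds attach. I would label $T_G$ accordingly: edge-labels lie in a finite, hence trivially wqo, set, while node-labels demand a separate wqo argument --- plausibly by induction on $k$ (each irreducible piece falling into a simpler subclass) or by a direct structural analysis of bond-indecomposable multigraphs. Once the node-labels are shown to live in a wqo, Kruskal's Tree Theorem delivers wqo of the labelled trees $\{T_G : G \in \mathcal{G}_{1,k}\}$ under the natural label-preserving topological embedding.

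The main obstacle, and where I expect the technical heavy lifting to sit, is translating a labelled tree embedding back into a graph contraction. The decomposition is non-canonical, and contractions performed inside different pieces of $G$ must glue consistently along shared bond-edges to yield a global $H \lmc G$. To handle this I would invoke the Nash-Williams minimal bad sequence argument: assuming for contradiction a bad sequence $(G_i)_{i \in \N}$ in $\mathcal{G}_{1,k}$ that is minimal in, say, edge count, split each $G_i$ along a carefully chosen bond (carrying enough auxiliary labelling that bonds of different $G_i$ can be ``matched''); the two halves of each $G_i$ are edge-strictly smaller, hence by minimality cannot sustain a bad sequence, and a Higman-style pigeonhole over the resulting pairs of halves produces indices $i < j$ with compatible contractions on both sides of matching bonds. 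These contractions glue together to give $G_i \lmc G_j$, contradicting the antichain hypothesis. The delicate point is ensuring the bond-choice and the extra labelling are canonical enough that matchings across different $G_i$ make sense, and this is what I expect to demand the most care.
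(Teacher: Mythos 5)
Your plan is not the paper's route (the paper never builds a tree of bonds nor invokes Kruskal: it reduces to \emph{labeled 2-connected} graphs by a block/minimal-bad-sequence argument, then uses Tutte decompositions whose 3-connected torsos have bounded size via Oporowski--Oxley--Thomas because large wheels or $K_{3,k}$'s would create large bonds, and reassembles with Higman-type ``attaching'' lemmas), and as it stands it has genuine gaps rather than being a complete alternative. The Kruskal branch is not a proof: ``bond-indecomposable'' pieces are never defined --- in $\mathcal{G}_{1,k}$ \emph{every} connected graph on at least two vertices has a bond (necessarily of size at most $k$), so the natural reading of the recursion degenerates to single vertices --- and the wqo of the node labels, which is where essentially all of the difficulty of the theorem lives, is deferred to an unspecified ``induction on $k$ or structural analysis''; you also acknowledge, without resolving it, that a Kruskal embedding of decomposition trees does not translate into a contraction model (contractions must preserve edge multiplicities exactly, so skipped tree nodes and mismatched attachments cannot simply be absorbed).

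The fallback minimal-bad-sequence argument fails as stated in two places. First, Nash--Williams minimality can only be played against objects that lie \emph{below the $G_i$ in the order $\lmc$ itself}: in the hybrid sequence $G_0,\dots,G_{\varphi(j)-1},H_j,H_{j+1},\dots$ you must exclude cross good pairs via $G_a \lmc H_{j'} \lmc G_{\varphi(j')}$, so each half must be a contraction of the graph it comes from. A side of a bond with the bond edges deleted is in general \emph{not} a contraction of $G_i$ (contraction cannot delete edges; e.g.\ $K_2$ is a side of a bond of the triangle but not a contraction of it), and ``edge-strictly smaller'' is not a substitute. One must instead keep the other side as a contracted root vertex or rooted edge and remove the root edges separately --- exactly what the paper's classes $\mathcal{C}$, $\mathcal{C}^-$, $\mathcal{C}^+$ and Lemma~\ref{l:cbar-wqo} do, using the bound $k$ on the multiplicity between the roots. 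Second, to glue $L_i \lmc L_j$ and $R_i \lmc R_j$ into $G_i \lmc G_j$ you need the two models to respect the full attachment pattern of the (up to $k$) bond edges, i.e.\ to be root-/label-preserving at up to $k$ named attachment vertices on each side, because condition M\ref{e:m3} forces exact multiplicities across the cut. That is a strictly stronger order than the one your bad sequence lives in, so the wqo you extract from minimality does not apply to it; escaping this circularity requires strengthening the statement from the outset to rooted and labeled graphs over an arbitrary wqo of labels, which is precisely the role of Lemmas~\ref{l:2clab-wqo} and~\ref{l:2c-wqo} and of the edge-rooted closure in the paper. Your phrase ``carrying enough auxiliary labelling that bonds of different $G_i$ can be matched'' names this difficulty but does not solve it, and it is the heart of the proof.
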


The complement of a simple graph $G$, denoted $\overline{G}$ is the
graph obtained by replacing every edge by a non-edge and vice-versa
in~$G$. Remark that a graph has a bond of order $k$ iff it contains
$\theta_k$ as contraction, and that it has 
$p$ connected components iff it can be contracted to~$\overline{K}_p.$
A class
$\mathcal{G}$ of graphs is said to be \emph{contraction-closed} if $H
\in \mathcal{G}$ whenever $H \lmc G$ for some~$G \in \mathcal{G}.$ 
As a consequence of our main theorem and of the fact that each
infinite subset of
$\mathcal{A}_\theta$ or $\mathcal{A}_{\overline{K}}$ is an
obstruction to be well-quasi-ordered, we have the following characterization.

\begin{corollary}\label{c:ccwqo}
  A contraction-closed class $\mathcal{H}$ is well-quasi-ordered by
  $\lmc$ iff there are $k,p \in \N$ such that $\forall k'>k,\
  \theta_{k'} \not \in \mathcal{H}$ and $\forall p'>p,\ \overline{K}_p
  \not \in \mathcal{H}.$
\end{corollary}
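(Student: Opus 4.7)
The plan is to deduce both directions of Corollary~\ref{c:ccwqo} directly from Theorem~\ref{t:main}, using the two equivalences already recalled in the excerpt: a graph $H$ contains a bond of size $k$ iff $\theta_k \lmc H$, and $H$ has at least $p$ connected components iff $\overline{K}_p \lmc H$.

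For the forward direction I would first note that both $\{\theta_i\}_{i \ge 1}$ and $\{\overline{K}_i\}_{i \ge 1}$ are infinite antichains in $(\mathfrak{G}, \lmc)$: the graphs $\overline{K}_i$ have no edges and therefore admit only themselves as contractions, while any contraction of $\theta_i$ is either $\theta_i$ itself or collapses its two vertices into one, so $\theta_j \lmc \theta_i$ forces $j = i$. Consequently, a wqo class $\mathcal{H}$ can contain only finitely many members of each family; taking $k$ (resp.~$p$) to be the largest index with $\theta_k \in \mathcal{H}$ (resp.~$\overline{K}_p \in \mathcal{H}$) supplies the thresholds required by the statement.

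For the backward direction, suppose such $k,p$ satisfy the stated exclusion. Since $\mathcal{H}$ is contraction-closed, any $H \in \mathcal{H}$ with a bond of size $k' > k$ would give $\theta_{k'} \lmc H$ and therefore $\theta_{k'} \in \mathcal{H}$, contradicting the hypothesis; hence every $H \in \mathcal{H}$ has all bonds of size at most $k$. The same argument applied to $\overline{K}_{p'}$ bounds the number of connected components of any $H \in \mathcal{H}$ by $p$. Thus $\mathcal{H} \subseteq \mathcal{G}_{p,k}$, and Theorem~\ref{t:main} combined with the fact that a subclass of a wqo is wqo finishes the proof. No real obstacle arises: once Theorem~\ref{t:main} is granted the corollary is essentially bookkeeping, the only mildly non-trivial point being the antichain verification for $\{\theta_i\}$ and $\{\overline{K}_i\}$, which contraction-closure then lets us translate into membership statements in $\mathcal{H}$.
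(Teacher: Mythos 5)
Your route is exactly the one-line derivation the paper intends: it gives no explicit proof of this corollary, merely invoking Theorem~\ref{t:main} together with the observation that $\{\theta_i\}$ and $\{\overline{K}_i\}$ are infinite antichains, and your forward direction (each family is an antichain, so a wqo class meets each only finitely often) is correct, as is the bound on the number of components in the backward direction ($\overline{K}_{p'}$ really is a contraction of any graph with $p'$ components). The gap is in the step ``any $H \in \mathcal{H}$ with a bond of size $k' > k$ would give $\theta_{k'} \lmc H$''. Edge contractions preserve the number of connected components, so a \emph{disconnected} graph never has $\theta_{k'}$ as a contraction: if the bond lies in one component of $H$, contracting its two sides and collapsing every other component to a point only shows $\theta_{k'} \sqcup \overline{K}_m \lmc H$ for some $m \geq 1$, and contraction-closure then places $\theta_{k'} \sqcup \overline{K}_m$, not $\theta_{k'}$, in $\mathcal{H}$. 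So the inclusion $\mathcal{H} \subseteq \mathcal{G}_{p,k}$ does not follow as you argue it.

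This is not a pedantic point: read literally, the ``if'' direction cannot be proved this way because it fails. Take $\mathcal{H}$ to be the contraction closure of $\{\theta_i \sqcup K_1\}_{i \geq 1}$, namely $\{\theta_i \sqcup K_1 : i \geq 1\} \cup \{\overline{K}_2\}$. This class is contraction-closed, contains no $\theta_{k'}$ whatsoever and no $\overline{K}_{p'}$ with $p' > 2$, yet $\{\theta_i \sqcup K_1\}_{i \geq 1}$ is an infinite antichain (a model between two such graphs would have to be a vertex bijection matching edge multiplicities). Hence the hypothesis has to be read, or strengthened, so as to exclude the disconnected obstructions $\theta_{k'} \sqcup \overline{K}_m$ as well --- equivalently, to require that no \emph{component} of a member of $\mathcal{H}$ has a bond of size more than $k$ --- and with that reading your argument does go through and yields $\mathcal{H} \subseteq \mathcal{G}_{p,k}$. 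In fairness, the paper is in the same position: its preliminary remark ``a graph has a bond of order $k$ iff it contains $\theta_k$ as a contraction'' is only valid for connected graphs, and you inherited that imprecision. But as a proof of the statement as written, your backward direction has a genuine hole at exactly this step, while everything else is sound bookkeeping.
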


In his study of infinite antichains for the (induced) subgraph
relation, Ding~\cite{Ding20091123} introduced the two following concepts.
An antichain $\mathcal{A}$ of a quasi-order $(\mathcal{S}, \lleq)$
is said to be \emph{canonical} if the following holds for every
$\lleq$-closed subclass $\mathcal{J}$ of $\mathcal{S}$: $\mathcal{J}$ has an
infinite antichain iff $\mathcal{J} \cap \mathcal{A}$ is
infinite. Canonical antichains can be used to characterize the $\lleq$-closed
subclasses of a quasi-order $(\mathcal{S}, \lleq)$ and also to
describe the variety of its antichains.

The following result is a complete characterization of the
canonical antichains of $\lmc$ in finite graphs, which extends the
results of Ding on canonical antichains of simple graphs for the
relations of subgraph and induced subgraph~\cite{Ding20091123}.

\begin{theorem}\label{t:can-anti}
  An antichain $\mathcal{A}$ of $\lmc$ is canonical
  iff $\mathcal{A}_{\theta} \cup \mathcal{A}_{\overline{K}}\setminus \mathcal{A}$ is finite.
\end{theorem}

In other words, an antichain $\mathcal{A}$ is canonical iff it contains all but
finitely many graphs from $\mathcal{A}_\theta$ and $\mathcal{A}_{\overline{K}}$. As noted in
the proof, this implies that a canonical antichain contains a finite number of graphs that do not belong to~$\mathcal{A}_\theta \cup \mathcal{A}_{\overline{K}}$. In the course of studying antichains, we also obtained general results that are of independent interest. They are presented in the corresponding section.

\paragraph{Organization of the paper.}
The notions and tools that are used in this paper are introduced in
\autoref{s:prelim}. \autoref{t:main} is proved in
\autoref{sec:newproof} and results pertaining to antichains appear in
\autoref{sec:ca}.

\paragraph{Conclusion.}
This work settles the case of multigraph contractions in the study of
well-quasi-ordered subclasses, a problem investigated by
Damaschke for induced subgraphs~\cite{JGT:JGT3190140406}, by Ding
for subgraphs~\cite{JGT:JGT3190160509} and induced
minors~\cite{JGT:JGT4}, and by Fellows \etal~for
several containment relations~\cite{fellows2009well}.
In particular, we give necessary and sufficient conditions for a class
of (multi)graphs to be well-quasi-ordered by multigraph
contractions. Furthermore, we characterize canonical antichains for
this relation, in the continuation
of  Ding's results for subgraph and contraction relation in~\cite{Ding20091123}.

\section{Preliminaries}
\label{s:prelim}

We denote by $V(G)$ the set of vertices of a graph $G$
and by $E(G)$ its multiset of edges. Given two adjacent vertices
$u,v$ of a graph $G,$ $\mult_G(\{u,v\})$ stands for the number of
parallel edges between~$u$ and~$v,$ called \emph{multiplicity} of the
edge $\{u,v\}.$
We denote by $\powset(S)$ the class of finite subsets of a set $S$ and by
$\mathcal{P}(S)$ its power set.

\begin{definition}
  A \emph{model} of $H$ in $G$ (\emph{$H$-model} for short) is a function $\mu \colon V(H) \to
  \mathcal{P}(V(G))$ such~that:
  \begin{enumerate}[(M1)]
  \item $\mu(u)$ and $\mu(v)$ are disjoint whenever $u,v \in
    V(H)$ are distinct;\label{e:m1}
  \item $\{\mu(u)\}_{u \in V(H)}$ is a partition of
    $V(G)$;\label{e:m2}
  \item for every $u \in V(H)$, the graph $G[\mu(u)]$
    is connected;\label{e:m25}
  \item for every $u,v \in V(H),$ $\mult_H(\{u,v\}) = \sum_{(u',v')
      \in \mu(u) \times \mu(v)} \mult_G(\{u',v'\})$.\label{e:m3}
  \end{enumerate}  
\end{definition}

Remark that $H$ is a contraction of $G$ iff $G$ has an
$H$-model. Intuitively, an $H$-models indicates which connected
subgraphs to contract in $G$ in order to obtain a graph isomorphic to~$H$.

\paragraph{Quasi-orders.}
A \emph{quasi-ordered set} (\emph{qoset} for short) is a pair $(\Sigma,
\lleq)$ where $\Sigma$ is a set and $\lleq$ is a binary relation on $S$
which is reflexive and transitive.
An \emph{antichain} is a sequence of pairwise non-comparable elements.
We say that $(\Sigma, \lleq)$ is a \emph{well-quasi-order}
(\emph{wqo} for short), or that its elements are \emph{well-quasi-ordered} by $\lleq,$ if it has neither an infinite decreasing sequence, nor an infinite antichain.

\begin{definition}[Cartesian production of qosets]
If $(\Sigma, \lleq_\Sigma)$ and $(\Sigma', \lleq_{\Sigma'})$ are two qosets, then
their \emph{Cartesian product} $(\Sigma \times \Sigma', \lleq_\Sigma \times \lleq_{\Sigma'})$
  is the qoset defined~by:
  \[
  \forall (x,x'),(y,y') \in \Sigma \times \Sigma',\  (x,x') \lleq_\Sigma \times \lleq_{\Sigma'}
  (y,y')\ \text{if}\ x \lleq_\Sigma y\ \text{and}\ x' \lleq_{\Sigma'} y'.
  \]
\end{definition}

\begin{proposition}[Higman~\cite{Higman01011952}]\label{p:wqo-product}
  If $(\Sigma, \lleq_\Sigma)$ and $(\Sigma', \lleq_{\Sigma'})$ are wqo, then so
  is~$(\Sigma \times \Sigma', \lleq_{\Sigma} \times \lleq_{\Sigma'}).$
\end{proposition}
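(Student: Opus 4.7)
The plan is to argue by contradiction. Suppose $(A \times B, \lleq_A \times \lleq_B)$ is not a wqo; since neither $A$ nor $B$ has an infinite strictly decreasing sequence the product cannot either, so the failure must be witnessed by an infinite bad sequence $\seqb{(a_i, b_i)}_{i \in \N}$, i.e.\ one for which no pair $i<j$ satisfies both $a_i \lleq_A a_j$ and $b_i \lleq_B b_j$. The strategy is to first project onto $A$ and extract an infinite non-decreasing subsequence, then use wqo-ness of $B$ on the matching $B$-subsequence to produce a good pair, contradicting badness.

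The engine of this strategy is the following auxiliary lemma, which I would prove first: every infinite sequence $\seqb{x_i}_{i \in \N}$ in a wqo $(S, \lleq)$ has an infinite non-decreasing subsequence. Call an index $i$ a \emph{peak} if there is no $j>i$ with $x_i \lleq x_j$. If the set of peaks were infinite, restricting the sequence to the peak indices would yield an infinite sequence in $(S, \lleq)$ containing no good pair, contradicting the wqo hypothesis on $S$. Hence only finitely many peaks exist, so past the largest peak every index $i$ has some $j>i$ with $x_i \lleq x_j$; a greedy choice then produces the required non-decreasing subsequence.

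Applied to the projection $\seqb{a_i}_{i \in \N}$, which lives in the wqo $(A, \lleq_A)$, the lemma supplies indices $i_1 < i_2 < \dots$ such that $a_{i_1} \lleq_A a_{i_2} \lleq_A \dots$. The companion subsequence $\seqb{b_{i_k}}_{k \in \N}$ sits in the wqo $(B, \lleq_B)$, so by definition of wqo it has a good pair: some $k < \ell$ with $b_{i_k} \lleq_B b_{i_\ell}$. Combining this with $a_{i_k} \lleq_A a_{i_\ell}$ gives $(a_{i_k}, b_{i_k}) \mathbin{\lleq_A \times \lleq_B} (a_{i_\ell}, b_{i_\ell})$ while $i_k < i_\ell$, contradicting badness.

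The only delicate step is the auxiliary lemma, and the peak argument handles it cleanly using nothing more than the wqo hypothesis on a single poset, so it is not really an obstacle. Everything afterwards is immediate because the product order is defined coordinatewise, and the proof generalises verbatim to finite products, which together with Remark~\ref{r:union-wqo} will let us freely combine wqos in later sections.
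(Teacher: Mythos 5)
Your argument is correct: it is the classical proof of Higman's product lemma, extracting an infinite non-decreasing subsequence from the first coordinates via the ``finitely many peaks'' argument and then finding a good pair among the corresponding second coordinates. The paper itself offers no proof to compare against --- Proposition~\ref{p:wqo-product} is simply quoted from Higman --- so there is no divergence to report; the only superfluous step on your side is the opening remark about strictly decreasing sequences, since with the paper's definition of wqo the failure of well-quasi-ordering already hands you an infinite bad sequence directly, and in any case that remark needs (and you omit) the small observation that a non-increasing sequence in a wqo is eventually constant.
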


\paragraph{Sequences.}
We~use the notation $\Sigma^\star$ for the class of all finite
sequences over the set $\Sigma$ (including the empty sequence).
For any qoset $(\Sigma, \lleq),$ we define
the relation $\lleq^\star$ on $\Sigma^\star$ as follows: for every $r
=\seqb{r_1,\dots, r_p}$ and $s = \seqb{s_1,\dots, s_q}$ of
$\Sigma^\star,$ we have $r \lleq^\star s$ if there is a increasing
function $\varphi \colon \intv{1}{p} \to \intv{1}{q}$ such that for
every $i \in \intv{1}{p}$ we have~$r_i \lleq s_{\varphi(i)}.$ This
generalizes the subsequence relation.
This order relation is extended to the class $\powset(\Sigma)$ of finite subsets of $\Sigma$ as follows,
generalizing the subset relation: for every $A,B \in \powset(\Sigma)$, we
write $A \lleq^\star B$ if there is an injection $\varphi \colon A \to
B$ such that $\forall x \in A,\ x \lleq \varphi(x).$

\begin{proposition}[Higman~\cite{Higman01011952}]\label{p:highman-wqolabel}
  If $(\Sigma, \lleq)$ is a wqo, then so is $(\Sigma^\star, \lleq^\star).$
\end{proposition}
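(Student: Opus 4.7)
The plan is to prove Proposition~\ref{p:highman-wqolabel} by the classical Nash-Williams minimal bad sequence argument, which fits naturally here since the length of a sequence in $A^\star$ provides the well-founded measure needed for the construction.

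First I would assume, for contradiction, that $(A^\star, \lleq^\star)$ admits a bad sequence, and then extract a \emph{minimal bad sequence} $\seqb{s_i}_{i \in \N}$ as follows: assuming $s_1, \dots, s_{i-1}$ already chosen so as to be extendable to a bad sequence, pick $s_i$ of minimum length among all $s \in A^\star$ such that $\seqb{s_1, \dots, s_{i-1}, s}$ extends to a bad sequence. Using dependent choice, this produces an infinite bad sequence in which each $s_i$ is "as short as possible" given its predecessors. Each $s_i$ must be non-empty, because the empty sequence satisfies $\seqb{} \lleq^\star t$ for every $t \in A^\star$, which would immediately create a good pair. Hence I can write $s_i = \seqb{a_i} \cdot t_i$ with $a_i \in A$ and $t_i \in A^\star$ strictly shorter than~$s_i$.

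Next I would apply the hypothesis that $(A, \lleq)$ is a wqo to the sequence $\seqb{a_i}_{i \in \N}$ in $A$: since every infinite sequence in a wqo has an infinite non-decreasing subsequence, I can extract indices $i_1 < i_2 < \cdots$ with $a_{i_1} \lleq a_{i_2} \lleq \cdots$. Now consider the mixed sequence
\[
\seqb{s_1, s_2, \dots, s_{i_1-1}, t_{i_1}, t_{i_2}, t_{i_3}, \dots}.
\]
Because $t_{i_1}$ is strictly shorter than $s_{i_1}$, the minimality in the choice of $s_{i_1}$ forces this mixed sequence to be \emph{good}, so it must contain a good pair.

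The final step is to analyze where this good pair can occur and obtain a contradiction with the badness of the original sequence. A good pair cannot lie entirely within the prefix $s_1, \dots, s_{i_1-1}$, since that prefix is part of the bad sequence $\seqb{s_i}$. A good pair of the form $s_j \lleq^\star t_{i_k}$ with $j < i_1 \leq i_k$ is also impossible, because $t_{i_k}$ is a suffix of $s_{i_k}$, so $s_j \lleq^\star t_{i_k}$ would imply $s_j \lleq^\star s_{i_k}$, contradicting badness. Hence the good pair must be of the form $t_{i_j} \lleq^\star t_{i_k}$ with $j<k$; but combined with $a_{i_j} \lleq a_{i_k}$, prepending these first letters via the injection $\varphi$ (sending the new first position to the new first position and shifting the rest) yields $s_{i_j} = \seqb{a_{i_j}} \cdot t_{i_j} \lleq^\star \seqb{a_{i_k}} \cdot t_{i_k} = s_{i_k}$, again contradicting badness of~$\seqb{s_i}$. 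This contradiction shows no bad sequence exists, so $(A^\star, \lleq^\star)$ is wqo.

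The main obstacle, conceptually, is the minimal bad sequence construction: one has to be careful that "extendable to a bad sequence" is well-defined at each step and that dependent choice is invoked cleanly, and the rest of the argument is a routine case analysis on where the guaranteed good pair sits in the mixed sequence.
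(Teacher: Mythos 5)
Your argument is correct. Note, though, that the paper does not prove this proposition at all: it is quoted as Higman's theorem with a citation, and used as a black box (e.g.\ in Lemma~\ref{l:circ}). What you have written is the classical Nash-Williams minimal-bad-sequence proof of Higman's lemma, and it holds up: the non-emptiness of each $s_i$ is justified (an empty term would be $\lleq^\star$-below every later term), the minimality of $s_{i_1}$ is invoked exactly where it is needed (the mixed sequence $\seqb{s_1,\dots,s_{i_1-1},t_{i_1},t_{i_2},\dots}$ cannot be bad because $t_{i_1}$ is strictly shorter than $s_{i_1}$), and the three-way case analysis on the location of the good pair is exhaustive, with the last case correctly recombining $a_{i_j}\lleq a_{i_k}$ and the embedding of $t_{i_j}$ into $t_{i_k}$ into an embedding of $s_{i_j}$ into $s_{i_k}$. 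The one standard fact you use beyond the definitions, that an infinite sequence in a wqo contains an infinite non-decreasing subsequence, is a routine consequence of well-quasi-ordering (via Ramsey's theorem or a direct extraction), and your appeal to dependent choice for the minimal bad sequence is the same device the paper itself uses, following Nash-Williams, in the proofs of Lemma~\ref{l:2clab-wqo} and Lemma~\ref{l:2c-wqo}. So your proof is a legitimate self-contained replacement for the citation, in the same spirit as the minimal-bad-sequence arguments already present elsewhere in the paper.
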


Labeled graphs as defined below will allow us to focus on
2-connected graphs.
\begin{definition}[labeled graph]
  Let $(\Sigma, \lleq)$ be a qoset. A $(\Sigma, \lleq)$-labeled graph is a pair $(G,
  \lambda)$ where $\lambda \colon V(G) \to
  \powset(\Sigma)$ is a function, referred to as the \emph{labeling of the
    graph}. If $\mathcal{H}$ is a class of (unlabeled) graphs, 
  $\lab_{(\Sigma, \lleq)}(\mathcal{H})$ denotes the class of $(\Sigma, \lleq)$-labeled graphs of
  $\mathcal{H}.$
\end{definition}
For simplicity, we will use the same symbol $G$ to refer to a labeled
graph and its underlying unlabeled graph and we will denote by $\lambda_G$ its labeling function.
Remark that any unlabeled graph can be seen as a
$\emptyset$-labeled~graph.

The contraction relation is extended to labeled graphs by additionally
allowing to relabel by $l'$ any vertex labeled $l$ whenever $l' \lleq
l$. In terms of models, this corresponds to the following extra requirement for $\mu$ to
be a model of $H$ in $G$:
\begin{enumerate}[(M1)]\setcounter{enumi}{4}
\item $\forall v \in V(H),\ \lambda_H(v)
\lleq^\star \bigcup_{u' \in \mu(u)}\lambda_G(u')$.\label{e:m5}
\end{enumerate}

\section{Small bonds and well-quasi-ordering}
\label{sec:newproof}

We first show that we can focus on (labeled) 2-connected graphs.

\begin{lemma}\label{l:higmgraph}
  Let $p$ be a positive integer, let $\mathcal{H}$ be a class of connected graphs, and let $\mathcal{H}'$ be the class of graphs with at most $p$ connected components, each of which belongs to $\mathcal{H}$. If $(\mathcal{H}, \lmc)$ is a wqo, then so is $(\mathcal{H}', \lmc)$.
\end{lemma}

\begin{proof}
  Let us observe that for every $q\in \N$, the class $\mathcal{H}_q$ of graphs with exactly $q$ connected components, each of which belongs to $\mathcal{H}$, is wqo by~$\lmc$. This follows from Higman's Lemma (\autoref{p:wqo-product}) as these graphs can be seen as sequences of exactly $q$ graphs of $\mathcal{H}$.
  As a finite union of wqos, $(\mathcal{H}', \lmc)$ is a wqo. Indeed, if $\mathcal{H}'$ had an infinite antichain, this antichain would have infinite intersection with $\mathcal{H}_i$ for some $i\in \intv{0}{p}$, a contradiction since $(\mathcal{H}_i, \lmc)$ is a wqo.
\end{proof}

\begin{lemma}\label{l:2clab-wqo}
  Let $\mathcal{H}$ be a contraction-closed class of connected
  graphs and let $\mathcal{H}^{(2)}$ be the subclass of 2-connected
  graphs of~$\mathcal{H}.$ If for every wqo $(\Sigma, \lleq)$, the qoset
  $(\lab_{(\Sigma, \lleq)}(\mathcal{H}^{(2)}), \lmc)$ is a wqo, then
  so is~$(\mathcal{H}, \lmc)$.
\end{lemma}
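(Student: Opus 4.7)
I would apply Nash-Williams' minimum bad sequence (MBS) argument, working with 2-rooted graphs so that both an initial root and the cut vertex at which a pendant block is attached can be tracked simultaneously. Let $\mathcal{H}_{rr}$ denote the class of 2-rooted graphs $(G,r,w)$ with $G\in\mathcal{H}$; it suffices to establish wqo of $(\mathcal{H}_{rr},\lmc)$, since any bad sequence in $\mathcal{H}$ lifts to one in $\mathcal{H}_{rr}$ by choosing arbitrary roots. Assume for contradiction that $(\mathcal{H}_{rr},\lmc)$ is not wqo, and extract a minimum bad sequence $\seqb{(G_i,r_i,w_i)}_{i\in\N}$ ordered by number of edges. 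By the standard minimum bad sequence lemma, the class $\mathcal{T}$ of 2-rooted graphs arising as 2-rooted strict contractions of some $(G_i,r_i,w_i)$ is well-quasi-ordered under $\lmc$.

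If infinitely many of the $G_i$ are 2-connected, restrict to that subsequence and view each $(G_i,r_i,w_i)$ as an element of $\lab_\Sigma(\mathcal{H}^{(2)})$ with $\Sigma=\{\star_r,\star_w\}$ (a finite antichain, hence a wqo), assigning $\{\star_r\}$ at $r_i$, $\{\star_w\}$ at $w_i$, and $\emptyset$ elsewhere. The hypothesis applied to $\Sigma$ yields a label-preserving good pair, which is precisely a 2-rooted good pair in the MBS — a contradiction. Otherwise one may assume, after passing to a subsequence, that every $G_i$ has a cut vertex and moreover that $w_i$ is the unique cut vertex of a pendant block $B_i$ of $G_i$ with $r_i\notin V(B_i)\setminus\{w_i\}$ (any remaining configurations being handled by analogous arguments). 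Write $R_i:=G_i[V(G_i)\setminus(V(B_i)\setminus\{w_i\})]$ for the rest: contracting $B_i$ onto $w_i$ preserves both roots, so $(R_i,r_i,w_i)$ is a strict 2-rooted contraction of $(G_i,r_i,w_i)$ and therefore lies in the wqo class $\mathcal{T}$.

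Next I would package $B_i$ as a labelled 2-connected graph that encodes the hanging piece. Set $\Sigma:=\mathcal{T}\cup\{\star\}$, a disjoint union of wqos and hence wqo, and define $\widetilde B_i\in\lab_\Sigma(\mathcal{H}^{(2)})$ (using that $\mathcal{H}$ is contraction-closed in the intended applications, so that $B_i\in\mathcal{H}^{(2)}$) by setting $\widetilde B_i.\lambda(w_i):=\{(R_i,r_i,w_i)\}$ and $\widetilde B_i.\lambda(v):=\emptyset$ for $v\neq w_i$. The hypothesis applied to $\Sigma$ gives that $(\lab_\Sigma(\mathcal{H}^{(2)}),\lmc)$ is wqo, so there exist $i<j$ and a label-preserving model $\mu\colon V(B_i)\to\powset(V(B_j))$ witnessing $\widetilde B_i\lmc\widetilde B_j$. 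Label-preservation at $w_i$ forces $w_j\in\mu(w_i)$ together with $(R_i,r_i,w_i)\lmc(R_j,r_j,w_j)$ via some 2-rooted model $\nu$. Combining $\mu$ on $B_i$ with $\nu$ on $R_i$ into a single model that glues bags at $w_i$ and $w_j$ — exactly as in the construction used to prove Lemma~\ref{l:rootsum-morph} — yields a 2-rooted model witnessing $(G_i,r_i,w_i)\lmc(G_j,r_j,w_j)$, a good pair in the MBS, contradicting badness.

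I expect the main obstacle to be the subcase reduction guaranteeing that $w_i$ is always the cut vertex of a pendant block $B_i$ disjoint from $r_i$: this assumption fails, for example, when $G_i$ has exactly two blocks joined at a single cut vertex with $r_i$ and $w_i$ lying in different blocks, and the argument then needs to handle both blocks symmetrically (or re-choose $w_i$, absorbing the move into an additional label). A secondary but routine technical point is the verification that the combined model $\mu\cup\nu$ in the last step is valid — covering all of $V(G_j)$ (with any unused hanging pieces of $G_j$ absorbed into the bag of $w_i$), respecting edge multiplicities, and being both label- and root-preserving — each of which follows directly from the construction and from the style of argument used in Lemma~\ref{l:rootsum-morph} and Corollary~\ref{c:same-backbone}.
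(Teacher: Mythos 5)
Your overall skeleton (Nash--Williams minimal bad sequence, labels drawn from a wqo of ``hanging'' pieces obtained via the MBS lemma, the hypothesis applied to a labeled $2$-connected part, and a Lemma~\ref{l:rootsum-morph}-style gluing at the end) is the right one, but the decomposition you chose has a genuine gap, and it is exactly the point you flag at the end. You extract the minimal bad sequence in the class of $2$-rooted graphs $(G_i,r_i,w_i)$, where both roots are arbitrary, and then assume after passing to a subsequence that $w_i$ is the cutvertex of a pendant block $B_i$ avoiding $r_i$. Nothing forces this, and you cannot re-choose $w_i$ once the minimal bad sequence has been fixed. The proposed remedies do not repair it: if you instead peel a pendant block $B_i$ attached at some cutvertex $c_i\neq w_i$, the remainder $(R_i,r_i,c_i)$ is in general \emph{not} a $2$-rooted contraction of $(G_i,r_i,w_i)$ --- contracting $B_i$ onto $c_i$ witnesses $(R_i,r_i,w_i)\lmc(G_i,r_i,w_i)$, not a contraction whose second root is $c_i$, and when $w_i\notin V(B_i)$ there is no reason any model should place $w_i$ in the bag of $c_i$. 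Hence such a piece need not lie in $\mathcal{T}$, and the MBS lemma --- the only tool you have for the well-quasi-ordering of the label set --- no longer applies. The conflict is structural: the wqo-ness of your labels requires the pieces to be $2$-rooted contractions of the MBS elements \emph{with their given roots}, while the whole purpose of the second root was to mark the attachment vertex of the peeled block; these two demands clash precisely in the configurations you dismiss as ``analogous''. (The reliance on $B_i\in\mathcal{H}^{(2)}$, i.e.\ on closure of $\mathcal{H}$ under taking blocks, is a smaller issue that the paper's own proof shares implicitly, so I do not count it against you.)

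The paper sidesteps this by a different decomposition which only needs one root. It runs the minimal bad sequence over the rooted closure $\mathcal{H}_r$, takes as backbone the block $A_i$ containing $\rt(G_i)$, and labels each cutvertex $c$ of $A_i$ with the \emph{entire} rooted component $B^i_c$ hanging at $c$ (plus a flag recording which vertex of $A_i$ is the root). The key point is that the complement of $B^i_c\setminus\{c\}$ is connected and contains $\rt(G_i)$, so each $(B^i_c,c)$ really is a rooted strict contraction of $(G_i,\rt(G_i))$; thus the MBS lemma makes the label poset a wqo with no positional assumption on any root. The hypothesis is then applied to the labeled blocks $A_i$, and a good pair is lifted back by gluing the model of $A_i$ with the root-preserving models of the $B^i_c$, exactly in the spirit of your final step. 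So the fix is to let the root dictate which $2$-connected block carries the labels (the root block, with everything else hung at its cutvertices), rather than planting a second arbitrary root to mark an attachment vertex; as written, the case reduction at the heart of your argument fails.
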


\begin{proof}
  This proof is very similar to the induced minor case proved
  in~\cite{fellows2009well}.
  We deal here with rooted graphs, that are graphs with a
  distinguished vertex called \emph{root}. We denote the
  root of a rooted graph~$G$ by $\rt(G)$.
  The contraction relation is extended to this setting by requiring
  roots to be contracted to roots. Formally, $\rt(G) \in
  \mu(\rt(H))$ for every model of $H$ in $G$.
  Assuming that $(\mathcal{H}, \lmc)$ is not a wqo,
  we consider the class $\mathcal{H}_r$ of all graphs that can be
  obtained by choosing a root in a graph of $\mathcal{H}$. Clearly,
  this class is not well-quasi-ordered by $\lmc$.

  In a sequence of graphs, $(H,G)$ is a \emph{good pair} if $H$ appear before
  $G$ and $H \lmc G$.
  We use the concept of \emph{bad sequence},
  that are infinite sequences with no good pair. The absence of bad
  sequences in a class of graphs is equivalent to this class being well-quasi-ordered (see \cite{Nash63onwe}).
  Towards a contradiction, we suppose the existence of a bad
  sequence. We consider a minimal one, in the following sense.
  Let $\seqb{G_i}_{i\in \N}$ be a bad sequence of graphs of $\mathcal{H}_r$ such
  that for every $i \in \N,$ there is no contraction $G$ of $G_i$
  (distinct from $G_i$) such that a bad
  sequence starts with $G_0, \dots, G_{i-1}, G$.

  For every $i\in \N$, let $A_i$ be the
  maximal 2-connected subgraph of $G_i$ which contains $\rt(G_i)$. Let $C_i$ the set of cutvertices of $G_i$ that belong to $A_i$. For each cutvertex $c \in C_i$, let $B_c^i$ be the connected
  component of $G_i\setminus (V(A_i)\setminus C_i)$ turned into a rooted graph
  by setting $\rt(B_c^i)=c$. Note that we have $B_c^i \lmc G_i$.
  Let us denote by $\mathcal{B}$ the family of rooted graphs $\mathcal{B} =
  \{B_c^i \colon c\in C_i, i\in \N\}$. We will show that $(\mathcal{B}, \lmc)$ is a
  wqo. Let $\seqb{H_j}_{j\in \N}$ be an infinite sequence in $\mathcal{B}$ and for every
  $j\in \N$ choose an $i = \varphi(j) \in \N$ for which $H_j \lmc G_i$. Pick
  a $j$ with smallest $\varphi(j)$, and consider the sequence $G_1, \dots,
  G_{\varphi(j)-1}, H_j, H_{j+1}, \dots$. By minimality of $\seqb{G_i}_{i\in \N}$
  and by our choice of $j$, since $H_j \lmc G_{\varphi(j)}$ and $H_j \neq G_{\varphi(j)}$,
  this is not a bad sequence so it contains a good pair $(G, G')$. Now, if $G$ is
  among the first $\varphi(j)-1$ elements, then as $\seqb{G_i}_{i\in \N}$ is bad we
  must have $G' = H_{j'}$ for some $j'\geq j$ and so we have $G_{i'} = G\lmc
  G' = H_{j'} \lmc G_{\varphi(j')}$, a contradiction. So there is a
  good pair in $\seqb{H_i}_{i \geq j}$ and hence the infinite sequence
  $\seqb{H_j}_{j\in \N}$ has a good pair, so $(\mathcal{B}, \lmc)$ is
  a~wqo.

  We will now find a good pair in $\seqb{G_i}_{i\in \N}$ to show a
  contradiction. The idea is to label the graph family $\mathcal{A}=\{A_i\}_{i
    \in \N}$ so that each cutvertex $c$ of a graph $A_i$ gets labeled by their
  corresponding connected component $B_c^i$, and the roots are preserved under
  this labeling. More precisely, for each $A_i$ we define a labeling
  $\sigma_i$ that assigns to every vertex $v \in V(G_i)$ a label
  $\{(\sigma_i^1(v), \sigma_i^2(v))\}$ defined as follows:

  \begin{itemize}
    \item $\sigma_i^1(v) = 1$ if $v = \rt(G_i)$ and $\sigma_i^1(v) = 0$~otherwise;
    \item $\sigma_i^2(v) = B_v^i$ if $v\in C_i$ and $\sigma_i^2(v)$ is
      the one-vertex rooted graph~otherwise.
  \end{itemize}

  The labeling $\sigma$ of $\mathcal{A}$ is then $\{\sigma_i : i\in
  \N\}$. Let us define a quasi-ordering $\lleq$ on the set of labels
  $\Sigma$ assigned by $\sigma$. For two labels $(s_a^1, s_a^2),(s_b^1, s_b^2)\in \Sigma$ we
  define $(s_a^1, s_a^2) \lleq (s_b^1, s_b^2)$ iff $s_a^1 = s_b^1$
  and $s_a^2 \lmc s_b^2$. Note that in this situation, $s_a^2$ and
  $s_b^2$ are rooted graphs, so $\lmc$ compares rooted graphs. Observe
  that since $(\mathcal{B},\lmc)$ is wqo, then $(\Sigma, \lleq)$ is
  wqo. For every $i \in \N$, let $A'_i$ be the  $(\Sigma,
  \lleq)$-labeled rooted graph $(A_i, \sigma_i)$. We now consider the
  infinite sequence $\seqb{A'_i}_{i \in \N}$. By our initial
  assumption, $(\lab_\Sigma(\mathcal{A}), \lmc)$ is wqo (as
  $\mathcal{A}$ consists only in 2-connected graphs), so there is a
  good pair $(A'_i, A'_j)$ in the sequence~$\seqb{A'_i}_{i \in \N}$.

  To complete the proof, we will show that $A'_i\lmc A'_j \Rightarrow G_i\lmc G_j$.
  Let $\mu$ be a model of $A'_i$ in $A'_j$. Then for each
  cutvertex $c\in C_i$, $\mu(c)$ contains a vertex $d \in C_j$ with $B_c^i\lmc
  B_d^j$. Let $\mu_c$ denote a model of
  $B_c^i$ onto $B_d^i$. We construct a model $g$ as follows:

\[
\nu \colon \left \{
  \begin{array}{rcl}
    V(G_i) &\to& \mathcal{P}(V(G_j))\\
    v &\mapsto& \mu(v) \text{ if } v\in A_i\setminus C_i\\
    v &\mapsto& \mu_c(v) \text{ if } v\in B_c^i\setminus C_i\\
    v &\mapsto& \mu(v) \cup \mu_v(v) \text{ if } v \in C_i
  \end{array}
\right .
\] 

We now prove that $\nu$ is a model of $G_i$ onto $G_j$. First
note that by definition of $\mu$ and each $\mu_c$, we have $\nu(u)\cap \nu(v) =
\emptyset$ for any pair of distinct vertices $u$ and $v$ in
$G_i$, and also every vertex of $G_j$ is in the image of some vertex
of $G_i$ (items \mref{e:m1} and \mref{e:m2} in the definition of a model). If $u\in C_i$, then $\mu(u)$ contains a vertex
$v\in C_j$ for which $B_u^i \lmc B_v^j$, and $v$ is also contained in
$\mu_v(v)$ since $\mu_v$ preserves roots. Thus, 
$G_j[\nu(u)]$ is connected when $u\in C_i$ (item
\mref{e:m25}). This is obviously true when $u \not \in C_i$ again by
the definitions of $\mu$ and each $\mu_c$. Moreover, the endpoints of
every edge of $G_i$ belong either both to $A_i$, or both to $B^i_c$,
so item \mref{e:m3} follows from the properties of $\mu$ and each
$\mu_c$. Finally, as the labeling $\sigma$ ensures that $\rt(G_j)\in
\nu(\rt(G_i))$, we establish that $G_i\lmc G_j$. So $\seqb{G_i}_{i\in
  \N}$ has a good pair $(G_i, G_j)$, a contradiction.
\end{proof}

Then we show that removing a maximum bond in a 2-connected graphs yields a graphs with smaller maximum bonds. This will be used in the inductive proof of \autoref{t:main}.
\begin{lemma}\label{l:bond}
  If $B \subseteq E(G)$ is a bond of maximum order in a 2-connected graph $G$, then none of
  the components of $G \setminus B$ has a bond of order~$|B|$.
\end{lemma}

\begin{proof}
By contradiction, let us assume that one component of $G \setminus  B$
has a bond $C$ with $|C| = |B|$.
Let $\{X,Y,Z\}$ be a partition of $V(G)$ be such that $G[X]$ and $G[Y\cup
Z]$ are the connected components of $G \setminus B$ and $G[Y]$ and
$G[Z]$ are those of $G[Y \cup Z] \setminus
C$, while no edge of $C$ is incident with a vertex of~$X$.

\noindent \textit{First case:} $B$ is incident with vertices of both
$Y$ and $Z$. Let $B'$ be the set of the edges of $B$ that are incident with
vertices of~$Y$.
Then observe that $G[X \cup Z]$ is connected and is a
connected component of $G \setminus (B' \cup C)$, the other one being $G[Y]$.
Therefore, $B' \cup C$ is a cut and $|B' \cup C| > |B|$, which
contradicts the maximality of~$B$. Therefore this case is not possible.

\noindent \textit{Second case:} $B$ is incident to vertices of exactly
one of $Y$ and $Z$. Without loss of generality, let us assume that
this set is $Y$.
Let $x \in X$ and $y \in Z$. As $G$ is 2-connected, there are two
paths $Q_1$ and $Q_2$ connecting $x$ to $y$ and sharing only
their endpoints. Let $Q_1'$ and $Q_2'$ be minimal subpaths of $Q_1$ and $Q_2$,
respectively, containing exactly one edge from $B$ and $C$. Let $D$ be a
minimum cut of $G[Y]$ separating the internal vertices of $Q_1$ from
that of $Q_2$. As $G[Y]$ is connected, $D \neq \emptyset$.
For every $i \in \{1,2\}$, let $Y_i$ be the connected component of
$G[Y] \setminus D$ containing the internal vertices of $Q_i'$ and let
$B_i$ (resp.\ $C_i$) be the edges of $B$ (resp.\ $C$) that have an
endpoint in~$Y_i$. As $\{B_1,B_2\}$ is a partition of $B$, there is an $i\in \{1,2\}$ such that $|B_i| \geq |B|/2$. Similarly, there
is a $j \in \{1,2\}$ such that $|C_j| \geq |C|/2 = |B|/2$. Let $B' =
B_i \cup C_j \cup D$ and let us show that it is a bond.
As $|B'|>|B|$, this would provide the desired contradiction to the
maximality of $B$.
If $i = j$,
then $B'$ separates $Y_i$ from $X \cup Y_{3-i} \cup Z$. The subgraph
$G[Y_i]$ is connected by definition of $Y_i$, and $G[X \cup Y_{3-i}
\cup Z]$ is because of the path~$Q_{3-i}'$.
If $i \neq j$, then $B'$ separates $Y_i \cup Z$ from $X\cup
Y_{3-i}$. There vertex sets induce connected subgraphs thanks to the paths
$Q'_i$ and $Q'_{3-i}$, respectively. Therefore $B'$ is a
bond and we are done.
\end{proof}

We are now ready to prove \autoref{t:main}.
\begin{proof}[Proof of \autoref{t:main}]
  Our goal is to show that for every $p,k \in \N$, the class $\mathcal{G}_{p,k}$ is well-quasi-ordered by~$\lmc$.
  According to \autoref{l:higmgraph}, if $\left (\mathcal{G}_{1,k}, \lmc \right)$ is a wqo, then so is $\left (\mathcal{G}_{p,k}, \lmc\right )$.
  Therefore we only need to consider the case where~$p=1$ (the case $p=0$ being trivial).
  Observe that one cannot increase the maximum order of a bond in a graph nor disconnect it by performing contractions. Hence $\mathcal{G}_{1,k}$ is contraction-closed. Therefore we can apply \autoref{l:2clab-wqo}, which allows us to focus on labeled 2-connected graphs. We call $\mathcal{G}_{1,k}'$ the class of graphs of $\mathcal{G}_{1,k}$ that are 2-connected.

  The proof then goes by induction on $k$.
  When $k=0$, then $\mathcal{G}'_{1,k}$ is  empty, so $\lab_{(\Sigma, \lleq)}\left (\mathcal{G}'_{1, k} \right )$ is trivially well-quasi-ordered, for every wqo $(\Sigma, \lleq)$.
  Let us now assume that $k>0$ and that for and every wqo $(\Sigma,
  \lleq)$, the class $\lab_{(\Sigma, \lleq)}\left (\mathcal{G}'_{1,
      k-1} \right )$ is well-quasi-ordered by $\lmc$ (induction
  hypothesis). By the remarks above, $\left (\lab_{(\Sigma,
      \lleq)}\left (\mathcal{G}_{p,k-1}\right ), \lmc \right)$ is a
  wqo, for every $p \in \N$.

  Let $(\Sigma, \lleq)$ be a wqo. By contradiction, we assume that $\lab_{(\Sigma, \lleq)}\left
    (\mathcal{G}_{1,k}\right )$ is not a well-quasi-order.
  Let $\{G_i\}_{i \in \N}$ be an infinite antichain in $\lab_{(\Sigma,
    \lleq)}\left (\mathcal{G}_{1,k}\right )$ such that, for some
  $k'\leq k$ and for every $i\in \N$, a largest bond $B_i$ in $G_i$
  has order $k'$.
  As no graphs of $\mathcal{G}_{1,k}$ has a bond of order more than
  $k$, such antichain always exist.

  We may also assume that,
if we respectively denote by $\{x_i^1, \dots, x_i^{k'}\}$ and
$\{y_i^1, \dots, y_i^{k'}\}$ the endpoints of the edges of $B_i$ in
the two connected components of $G_i \setminus B_i$, the bipartite
graph between $\{x_i^1, \dots, x_i^{k'}\}$ and $\{y_i^1, \dots,
y_i^{k'}\}$ is the same for every $i$.
We mean here that $\mult_{G_i}(\{x_i^l, y_i^{l'}\}) =
\mult_{G_j}(\{x_j^l, y_j^{l'}\})$ for every $l, l' \in \intv{1}{k'}$.
This is possible as there is a finite number of bipartite graphs on
$k$ edges and at most $2k$ vertices.

For every $i\in \N$, let $G_i' = G_i \setminus B_i$.
Let $H_i$ be a copy of $G_i'$ (with the same vertex set) that we label as follows.
Let $v \in V(H_i)$. If $v = x_i^{j}$ (resp.\ $v = y_i^{j}$) for some $j \in \intv{1}{k'}$,
then we set $\lambda_{H_i}(v) = (\lambda_{G_i'}(v), j)$ (resp.\ $\lambda_{H_i}(v) = (\lambda_{G_i'}(v), 2j)$), otherwise we set $\lambda_{H_i}(v) = (\lambda_{G_i'}(v), 0)$.

Let $\Sigma'  = \Sigma \times \intv{0}{2k-1}$, let $\lleq'$ be the Cartesian product of $\lleq$ and $=$ and notice that, as a Cartesian product of wqos, $(\Sigma', \lleq')$ is a wqo (\autoref{p:wqo-product}).
According to \autoref{l:bond}, the graph $H_i$ belong to $\lab_{(\Sigma', \lleq')}\left (\mathcal{G}_{2,k-1}\right )$.
By induction hypothesis, $\{H_i\}_{i \in \N}$ is wqo by~$\lmc$.
Let $i$ and $j$ be distinct integers such that $H_i \lmc H_j$ and let $\mu$ be a model of $H_i$ in~$H_j$.
Let us show that $\mu$ is also a model of $G_i$ in $G_j$. The
properties \mref{e:m1}, \mref{e:m2}, and \mref{e:m25} follow
from the definition of~$\mu$, as well as \mref{e:m3} when $u$ and
$v$ belong to the same connected component of~$H_i$. Also, as $V(G_i)
= V(H_i)$, $\mu$ seen as model of $G_i$ in $G_j$ satisfies \mref{e:m5}. Therefore we only need to prove that \mref{e:m3} holds when $u$ and $v$ belong to distinct connected components of $H_i$.

Let $l \in \intv{1}{k'}$. Since $\lambda_{G_i}(x_i^l)$ is only
comparable with the labels that have $l$ as their second coordinate, and
that $x_j^l$ is the only vertex of $G_j$ with that property, we get
$x_j^l \in \mu(x_i^l)$. Similarly, $y_j^l \in \mu(y_i^l)$. The
image of $\mu$ consists of disjoint subsets, thus we have that if $v\in
V(G_i)$ is not of the form $x_i^l$ or $y_i^l$ for some $l \in
\intv{1}{k'}$, then $\mu(v) \cap \bigcup_{l=1}^{k'}\{x_j^l, y_j^l\} =
\emptyset$. As the possible edges in $G_j$ between two vertices $u,v$
that belong to distinct connected components of $H_i$ are the edges of
$B_i$, we deduce that if one of $u,v$ is not of the form $x_i^l$ or
$y_i^l$ for some $l \in \intv{1}{k'}$, then there is no edge between a
vertex of $\mu(u)$ and one of~$\mu(v)$. This proves \mref{e:m3} in
this case. The case where $u = x_i^l$ and $v = y_j^{l'}$ for some
$l,l' \in \intv{1}{k'}$ follows from our choice of the antichain
$\{G_i\}_{i\in \N}$ with the property that $\mult_{G_i}(\{x_i^l, y_i^{l'}\}) =
\mult_{G_j}(\{x_j^l, y_j^{l'}\})$.
\end{proof}

\section{On canonical antichains}
\label{sec:ca}

We present in this section general lemmas on antichains and prove \autoref{t:can-anti}.
If $S$ is the subset of a qoset $(\Sigma, \lleq)$, the $\lleq$-\emph{closure} of $S$  is defined as
$\{x,\ x \lleq y\ \text{for some}\ y \in S\}$.
For every $A \subseteq \Sigma$, we define $\incl_\lleq(A) = \{x \in \Sigma,\ x \prec
a\ \text{for some}\ a \in A\}$. An antichain $A$ of $(\Sigma, \lleq)$ is said to be a \emph{fundamental antichain} if $\incl_{\lleq}(A)$ has no infinite antichains. This concept was introduced by \cite{Ding20091123} in its study of canonical antichains.

\begin{lemma}\label{l:coucou}
  Let $(\Sigma, \lleq)$ be a qoset and let $A$ be an antichain. If $A$ is canonical, then $A$ is fundamental.
\end{lemma}

\begin{proof}
  Let $C = \incl_{\lleq}(A)$.
  As $C$ is $\lleq$-closed, it contains infinite antichains iff $C \cap A$ is infinite.
  However, for every $x \in C$ there is a element $y \in A$ such that $x \prec y$.
  Therefore, if $x \in C \cap A$, then $A$ contains two distinct elements that are comparable. We deduce $C \cap A = \emptyset$. Therefore $C$ has no infinite antichains: $A$ is canonical.
\end{proof}

A consequence of \autoref{l:coucou} and \autoref{t:can-anti} is that every antichain $\mathcal{A}$ of $\lmc$ such that $\mathcal{A}_{\theta} \cup \mathcal{A}_{\overline{K}}\setminus \mathcal{A}$ is finite is fundamental. In the following lemma, we formalize and extend observations on canonical antichains made in \cite{Ding20091123}.

\begin{lemma}\label{l:cancan}
  Let $(\Sigma, \lleq)$ be a qoset and let $A,B$ be two infinite antichains:
  \begin{enumerate}[(i)]
  \item if $A$ and $B$ are canonical, $A\setminus B$ and $B \setminus A$ are finite;\label{ii1}
  \item if $A$ is canonical and $A\setminus B$ is finite then $B$ is
  canonical and $B \setminus A$ is finite.\label{ii2}
  \end{enumerate}
\end{lemma}

\begin{proof}
\textit{Proof of \itemref{ii1}.}
  We consider $D = (B\setminus A) \cup
  \incl(B \setminus A)$, the $\lleq$-closure of $B\setminus A$.
  If $B \setminus A$ is infinite, then $D$ contains an infinite
  antichain. Therefore, as $A$ is canonical, $D \cap A$
  is infinite. However, $A \cap \incl(B)$ is finite because $B$, being
  canonical, is fundamental (\autoref{l:coucou}).
  Hence $A \cap (B\setminus A)$ is infinite, a contradiction.

\noindent \textit{Proof of \itemref{ii2}.}
  By definition, for every $\lleq$-closed class $C$, $C$ has an infinite antichain iff $C \cap A$ is infinite.
  As $A \setminus B$ is finite, $C \cap A$ is infinite iff $C \cap A \cap B$ is infinite. This implies that $C\cap B$ is infinite.
  On the other hand, if $C \cap B$ is infinite, then $C$ clearly has
  an infinite antichain. Therefore $B$ is a canonical antichain. Applying \itemref{ii1} we get that $B \setminus A$ is finite.
\end{proof}

\autoref{t:can-anti} is a consequence of \autoref{l:cancan} applied with $\mathcal{A}$ and $\mathcal{A}_\theta \cup \mathcal{A}_{\overline{K}}$ which is, as a consequence of \autoref{c:ccwqo}, a canonical antichain.

\section*{Acknowledgements}
\addcontentsline{toc}{section}{Acknowledgements}
The authors thank the anonymous referee whose comments allowed us to substantially shorten the paper.


\begin{thebibliography}{10}

\bibitem{JGT:JGT3190140406}
Peter Damaschke.
\newblock Induced subgraphs and well-quasi-ordering.
\newblock {\em Journal of Graph Theory}, 14(4):427--435, 1990.

\bibitem{JGT:JGT3190160509}
Guoli Ding.
\newblock Subgraphs and well-quasi-ordering.
\newblock {\em Journal of Graph Theory}, 16(5):489--502, 1992.

\bibitem{JGT:JGT4}
Guoli Ding.
\newblock Chordal graphs, interval graphs, and wqo.
\newblock {\em Journal of Graph Theory}, 28(2):105--114, 1998.

\bibitem{Ding20091123}
Guoli Ding.
\newblock On canonical antichains.
\newblock {\em Discrete Mathematics}, 309(5):1123 -- 1134, 2009.

\bibitem{fellows2009well}
{Michael R.} Fellows, Danny Hermelin, and {Frances A.} Rosamond.
\newblock Well quasi orders in subclasses of bounded treewidth graphs and their
  algorithmic applications.
\newblock {\em Algorithmica}, 64(1):3--18, 2012.

\bibitem{Higman01011952}
Graham Higman.
\newblock Ordering by divisibility in abstract algebras.
\newblock {\em Proceedings of the London Mathematical Society},
  s3-2(1):326--336, 1952.

\bibitem{Lui2014}
Chun {H}ung Liu.
\newblock {\em Graph Structures and Well-Quasi-Ordering}.
\newblock PhD thesis, Georgia Tech, 2014.

\bibitem{Kruskal60well}
J.~B. Kruskal.
\newblock Well-quasi-ordering, the tree theorem, and {V}azsonyi's conjecture.
\newblock {\em Transactions of the American Mathematical Society}, 95:210--225,
  1960.

\bibitem{Nash63onwe}
C.~St. J.~A. Nash-Williams.
\newblock On well-quasi-ordering finite trees.
\newblock {\em Proceedings of the Cambridge Philosopical Society}, 59:833--835,
  1963.

\bibitem{Robertson:2010:GMX}
Neil Robertson and Paul Seymour.
\newblock Graph {M}inors {XXIII}. {N}ash-{W}illiams' immersion conjecture.
\newblock {\em Journal of Combinatorial Theory, Series B}, 100(2):181--205,
  March 2010.

\bibitem{Robertson2004325}
Neil Robertson and Paul~D. Seymour.
\newblock Graph {M}inors. {XX}. {W}agner's conjecture.
\newblock {\em Journal of Combinatorial Theory, Series B}, 92(2):325 -- 357,
  2004.

\end{thebibliography}
\addcontentsline{toc}{section}{References}

\end{document}